\documentclass[12pt]{amsart}

\newif\ifPDF
\ifx\pdfoutput\undefined\PDFfalse
\else \ifnum \pdfoutput > 0 \PDFtrue
        \else \PDFfalse
        \fi
\fi

\usepackage[centertags]{amsmath}
\usepackage{amsfonts}
\usepackage{mathrsfs}
\usepackage{textcomp}
\usepackage{amssymb}
\usepackage{amsthm}
\usepackage{newlfont}
\usepackage[all]{xy}


\ifPDF
  \usepackage[pdftex]{color, graphicx}
  \usepackage[pdftex, bookmarks, colorlinks]{hyperref}
  \hypersetup{colorlinks=false}


\else
  \usepackage{color}
  \usepackage[dvips]{graphicx}
  \usepackage[dvips]{hyperref}
\fi


\usepackage[margin=1in]{geometry}



\usepackage[pagewise, mathlines, displaymath]{lineno}

\newtheorem{thm}{Theorem}[section]
\newtheorem{cor}[thm]{Corollary}
\newtheorem{lem}[thm]{Lemma}
\newtheorem{prop}[thm]{Proposition}
\theoremstyle{definition}
\newtheorem{defn}[thm]{Definition}
\theoremstyle{remark}

\numberwithin{equation}{section}

\newcommand{\norm}[1]{\left\Vert#1\right\Vert}
\newcommand{\abs}[1]{\left\vert#1\right\vert}

\newcommand{\Int}{\mathbb Z}


\begin{document}


\title[Dynamical Asymptotic Dimension]{On the Dynamical Asymptotic Dimension of a free $\Int^d$-action on the Cantor set}

\author{Zhuang Niu}
\address{Department of Mathematics, University of Wyoming, Laramie, Wyoming, USA, 82071}
\email{zniu@uwyo.edu}

\author{Xiaokun Zhou}
\address{Department of Mathematics, University of Wyoming, Laramie, Wyoming, USA, 82071}
\email{xzhou3@uwyo.edu}

\thanks{The research is supported by an NSF grant (DMS-1800882).}
\keywords{Dynamical asymptotic dimension, free $\mathbb Z^d$-actions, Cantor system}
\date{\today}


\begin{abstract}
Consider an arbitrary extension of a free $\Int^d$-action on the Cantor set. It is shown that it has dynamical asymptotic dimension at most $3^d - 1$.
\end{abstract}

\maketitle

\section{Introduction}

Dynamical Asymptotical Dimension is introduced by Guentner, Willett, and Yu in \cite{GWY} to describe the complexity of a topological dynamical system: 
\begin{defn}
Consider a group action $X\curvearrowleft \Gamma$, where $X$ is a compact Hausdorff space and $\Gamma$ is a discrete group. Its dynamical asymptotic dimension (DAD) is the smallest non-negative integer $d$ such that for any finite subset $\mathcal F \subseteq \Gamma$, there is an open cover $U_0 \cup U_1 \cup \cdots \cup U_d$ of $X$ such that for each $U_i$, $0\leq i\leq d$, each $x\in U_i$,  the cardinality of the set $$\mathcal O_x:=\{y\in U_i: \exists \gamma_1, ..., \gamma_K\in \mathcal F,\ y=x\gamma_1\cdots\gamma_K,\ x\gamma_1\cdots\gamma_k\in U_i,\ 1\leq k\leq K,\ K\in\mathbb N\}$$ is finite and uniformly bounded (with respect to $x$).
\end{defn}

It is shown in \cite{GWY} that the dynamical asymptotical dimension of any free $\Int$-action is at most $1$, regardless of the space $X$. It is also shown in \cite{GWY} that for any discrete group $\Gamma$ with asymptotic dimension at most $d$, there is a $\Gamma$-action on the Cantor set which has  dynamical asymptotical dimension at most $d$. In this note, we estimate the dynamical asymptotical dimension of an arbitrary $\Int^d$-action on the Cantor set. In fact, we have the following theorem:
%
\theoremstyle{theorem}
\newtheorem*{thmA}{Theorem}
\begin{thmA}[Theorem \ref{main-thm} and Corollary \ref{main-cor}]
Any extension of a free $\Int^d$-action on the Cantor set has dynamical asymptotic dimension at most $3^d-1$.
\end{thmA}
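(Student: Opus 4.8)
The plan is to reduce the bound to a combinatorial coloring of a clopen rectangular tiling of the system, and I now describe the reductions, the tiling/coloring step, and the resulting estimate.

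\emph{Reductions.} The quantity in the definition of dynamical asymptotic dimension is monotone in $\mathcal F$, so it suffices to produce, for each $R\in\mathbb N$ and $\mathcal F=\{-R,\dots,R\}^d$, an open cover of $X$ into $3^d$ sets with uniformly bounded orbits; for such $\mathcal F$ and an open set $U$, reading everything in orbit coordinates (identify $x$ with $0\in\Int^d$ and $x\cdot v$ with $v$), the set $\mathcal O_x$ is the connected component of $0$ in $\{v\in\Int^d:\ x\cdot v\in U\}$ for the graph on $\Int^d$ with edges $\{v,v+\gamma\}$, $\gamma\in\mathcal F$. Moreover, if $(X,\Int^d)$ is an extension of a free $\Int^d$-action $(Z,\Int^d)$ on the Cantor set, with factor map $q$, then pulling back a witnessing cover of $Z$ along $q$ works for $X$: preimages of clopen sets are clopen, and a path in $X$ remaining in $q^{-1}(U)$ projects to a path in $Z$ remaining in $U$, so, since the action on $Z$ is free, the number of reachable displacements in $\Int^d$ — hence the number of reachable points in $X$ — is bounded by $\abs{\mathcal O_{q(x)}}$. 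So it suffices to treat a free $\Int^d$-action on the Cantor set.

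\emph{Tiling and coloring.} Fix $n>R$. I will use a higher-dimensional Kakutani--Rokhlin decomposition of such a system: a clopen \emph{box castle}
\[
X=\bigsqcup_{j=1}^{m}\bigsqcup_{v\in S_j}v\cdot B_j,\qquad S_j=\prod_{i=1}^{d}[0,\ell_{j,i}),
\]
with clopen bases $B_j$ and every side length $\ell_{j,i}$ a multiple of $3n$. Such a castle amounts to a continuous $\Int^d$-equivariant choice of an honest tiling $\mathcal T(x)$ of $\Int^d$ by translates of the boxes $S_j$: in orbit coordinates the ``base visits'' of $x$ cut $\Int^d$ into pairwise disjoint translates of the $S_j$. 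I then subdivide each box $\prod_i[0,\ell_{j,i})$ into $n$-cubes, color the $n$-cube with cell index $\mathbf a\in\prod_i\{0,\dots,\ell_{j,i}/n-1\}$ by $\mathbf a\bmod 3\in(\Int/3\Int)^d$, and transport this coloring along $\mathcal T(x)$; this gives each $x\in X$ a color $\chi(x)\in(\Int/3\Int)^d$, and the clopen sets $W_c:=\chi^{-1}(c)$ form an open cover of $X$ by $3^d$ sets.

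\emph{The estimate, and the obstacle.} To bound $\mathcal O_x$ inside a fixed $W_c$, consider in orbit coordinates a step of $\ell^\infty$-length $<n$ from a lattice point $w$ of color $c$. If it stays in the $n$-cube of $w$ there is nothing to do; if it stays in the tile of $w$ but leaves that $n$-cube, some cell coordinate changes by $\pm1$, hence $\chi$ changes and the step leaves $W_c$; and if it leaves the tile of $w$ then, since distinct tiles of $\mathcal T(x)$ are disjoint, there is a coordinate $j$ in which the two tiles' projections are disjoint, which forces the starting cell to be a top $n$-cube in direction $j$ (cell coordinate $\equiv2\bmod3$) and the target cell a bottom $n$-cube (cell coordinate $\equiv0\bmod3$), so again $\chi$ changes. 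Hence the $\mathcal F$-component of $0$ inside $\chi^{-1}(c)$ lies in the single $n$-cube of $0$, so $\abs{\mathcal O_x}\le n^d$ uniformly in $x$ and $c$, and the dynamical asymptotic dimension is at most $3^d-1$. The main obstacle is the structure theorem used above: for an \emph{arbitrary} free $\Int^d$-action on the Cantor set one must produce a clopen box castle whose tower shapes are rectangles with all side lengths divisible by $3n$. I expect this via a marker argument — a clopen $N$-separated, $CN$-syndetic marker set obtained by a maximality/zero-dimensionality argument, followed by a translation-local rule that refines the resulting Delone pattern to a rectangular tiling with side lengths in a prescribed set of multiples of $3n$ — and checking that such a refinement exists and depends clopenly on $x$ is where the genuine work lies; granting it, the coloring argument and the reductions above are routine.
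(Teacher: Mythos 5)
Your reductions (monotonicity in $\mathcal F$, passing to $\mathcal F=\{-R,\dots,R\}^d$, and pulling a cover back through the factor map using freeness of the base) are fine and match the paper's extension lemma, and your mod-$3$ coloring of an exact rectangular castle would indeed bound each component by one $n$-cube. The genuine gap is the structure theorem you defer to the end: a clopen box castle whose shapes are rectangles with \emph{all side lengths divisible by $3n$} does not exist for a general free $\Int^d$-Cantor system -- it is not merely ``genuine work,'' it is false. Already for $d=1$, such a castle is a continuous equivariant tiling of each orbit by intervals of length divisible by $3$, and then $\chi(x):=$ (position of $0$ in its tile) $\bmod 3$ is a continuous map with $\chi(x\cdot 1)=\chi(x)+1 \pmod 3$, i.e.\ a continuous eigenfunction with eigenvalue $e^{2\pi i/3}$; the dyadic odometer (a free Cantor $\Int$-system) has no such eigenvalue. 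The obstruction persists for $d\geq 2$: in a tiling by boxes with all sides divisible by $3$, every axis-parallel line meets each box in its full width in that direction, so each line inherits an interval tiling with lengths divisible by $3$, and the same $\chi$ gives $\chi(x e_1)=\chi(x)+1 \pmod 3$ continuously; restricting to a fiber of the $\Int^d$ dyadic odometer again produces a forbidden eigenvalue. So any repair must either drop the divisibility constraint (which breaks your top-cell/bottom-cell color change when a step crosses between tiles) or abandon exact clopen tilings altogether; note that even exact clopen rectangular castles without congruence conditions are a delicate matter for $d\geq 2$ and nothing as strong is needed.

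The paper's route is designed precisely to avoid this. It never produces an exact tiling: Proposition \ref{exist-tiling} builds, by a marker/greedy argument with clopen sets, a continuous equivariant assignment $x\mapsto\mathcal T(x)$ of \emph{quasi}-tilings by $r$-separated cubes $c+\square_D$ that are only required to meet $\square_E$ (with $D\leq E\leq 2D$), and Lemma \ref{shift-tiling} shows that among the $3^d$ translates $\mathcal T(x)+e_i$ of such a quasi-tiling at least one covers $0$. The sets $\Omega_i=\{x: 0\in\mathrm{Dom}(\mathcal T_i(x))\}$ then cover $X$, and the $r$-separation (rather than a coloring) confines any $\mathcal F$-chain inside a single $\Omega_i$ to one cube, giving the bound $(2D+1)^d$ and hence DAD $\leq 3^d-1$. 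If you want to salvage your argument, replacing your exact castle by such separated cube quasi-tilings plus the $3^d$-translate lemma is essentially the missing ingredient.
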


\section{Main result and its proof}

\subsection{Quasi-tilings of $\Int^d$}
Let us start with certain quasi-tilings (see \cite{OW-tiling}) of $\Int^d$ by cubes:
\begin{defn}
Consider $\Int^d$. For any natural number $l$, denote by $\square_l$ the cube $$\square_l=\{-l, -l+1, ..., l-1, l\}^d\subseteq \Int^d.$$

Let $r, D, E$ be natural numbers. An $(r, D, E)$-tiling of $\Int^d$, denoted by $\mathcal T$, is a collection of $c_i\in \Int^d$ such that with  
$$\mathrm{Dom}(\mathcal T) = \bigcup_{i}(c_i + \square_D),$$ then,
\begin{enumerate}
\item $(c_i + \square_D) \cap (c_j+\square_D) = \varnothing$, $i\neq j$,
\item The (Euclidean) distance between $c_i + \square_D$ and $c_j+\square_D$ is at least $r$ if $i\neq j$, and
\item $\square_E \cap \mathrm{Dom}(\mathcal T) \neq \varnothing$.
\end{enumerate}
\end{defn}

In other words, an $(r, D, E)$-tiling of $\Int^d$ is a quasi-tiling by cubes of size $2D+1$, such that tiles are $r$-separated, but they almost cover $0$ up to $E$. 

It turns out that if $D\leq E\leq 2D$, then there are $e_0=0, e_1, e_2, ..., e_{3^d-1}\in \Int^d$ such that for any $(r, D, E)$-tiling $\mathcal T$, one of $\mathcal T, \mathcal T+e_1, ..., \mathcal T+ e_{3^d-1}$ actually covers $0$:

\begin{lem}\label{shift-tiling}
For any natural number $E$, then there are $e_1, e_2, ..., e_s\in \Int^d$, where $s=3^d-1$, such that if $\mathcal T$ is an $(r, D, E)$-tiling of $\Int^d$ for some natural numbers $r$ and $D$ with $D\leq E \leq 2D$, then $$ 0 \in \mathrm{Dom}(\mathcal T) \cup \mathrm{Dom}(\mathcal T + e_1)\cup\cdots \cup \mathrm{Dom}(\mathcal T + e_s),$$ where $s = 3^d-1$.
\end{lem}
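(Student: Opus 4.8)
The plan is to write down the shifts $e_1,\dots,e_s$ by hand and then reduce the lemma to an elementary covering statement about cubes. Put $e_0 := 0$ and let $e_1,\dots,e_s$ be an enumeration of the nonzero elements of $\set{-E,0,E}^d \subseteq \Int^d$; there are precisely $s = 3^d-1$ of these, and --- crucially --- they depend only on $E$, not on $r$ or $D$. Only the covering property (3) of an $(r,D,E)$-tiling will be used; properties (1) and (2) play no role here.

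The heart of the matter is the claim
$$\square_{D+E} \ \subseteq\ \bigcup_{j=0}^{s}\bigl(e_j+\square_D\bigr)\qquad\text{whenever } D\le E\le 2D .$$
I would prove this one coordinate at a time. Given $x=(x_1,\dots,x_d)$ with $\norm{x}_\infty\le D+E$, for each $k$ choose $\epsilon_k\in\set{-E,0,E}$ to be $0$ if $\abs{x_k}\le D$, to be $E$ if $x_k>D$, and to be $-E$ if $x_k<-D$. In the first case $\abs{x_k-\epsilon_k}=\abs{x_k}\le D$; in the second case $x_k-E\le(D+E)-E=D$ and $x_k-E> D-E\ge D-2D=-D$, so again $\abs{x_k-\epsilon_k}\le D$; the third case is symmetric. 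Hence $x-(\epsilon_1,\dots,\epsilon_d)\in\square_D$, and $(\epsilon_1,\dots,\epsilon_d)$ equals one of $e_0,\dots,e_s$. The inequality $E\le 2D$ is exactly what is spent here: it guarantees that a single displacement by $\pm E$ in a coordinate cannot overshoot $\square_D$ on the near side, while $\norm{x}_\infty\le D+E$ controls the far side.

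With the claim in hand the lemma is immediate. Let $\mathcal T$ be an $(r,D,E)$-tiling with $D\le E\le 2D$. By property (3) there is a center $c_i$ of $\mathcal T$ with $(c_i+\square_D)\cap\square_E\ne\varnothing$; choosing a point in the intersection and writing it as $c_i+q$ with $q\in\square_D$ gives $\norm{c_i}_\infty\le\norm{q}_\infty+\norm{c_i+q}_\infty\le D+E$, so $-c_i\in\square_{D+E}$. By the claim, $-c_i\in e_j+\square_D$ for some $0\le j\le s$; since $\square_D=-\square_D$ this says $0\in(c_i+e_j)+\square_D\subseteq\mathrm{Dom}(\mathcal T+e_j)$, which is exactly the asserted conclusion, the case $j=0$ reading $0\in\mathrm{Dom}(\mathcal T)$.

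I do not foresee a genuine obstacle. The only subtlety --- already flagged --- is that the shifts must be produced uniformly in $D$ and $r$, and the scale-invariant hypothesis $D\le E\le 2D$ is precisely what lets the single fixed set $\set{-E,0,E}^d$ work for every admissible $D$.
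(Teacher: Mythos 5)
Your proof is correct and takes essentially the same approach as the paper: both choose the shifts to be the points of $\{0,\pm E\}^d$ and run the same coordinate-wise case analysis exploiting $E\le 2D$ together with condition (3). You phrase it directly as the covering $\square_{D+E}\subseteq\bigcup_{j}(e_j+\square_D)$, while the paper argues by contradiction that any tile meeting $\square_E$ must contain one of the $e_i$; these are the same fact in dual form.
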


\begin{proof}
Set $$\{e_0, e_1, ..., e_{3^d-1}\}=\{(n_1, n_2, ..., n_d) \in \Int^d: n_i \in \{0, \pm E\}\},$$ with $e_0=(0, ..., 0)$. In order to prove the lemma, it is enough to show that if $0 \notin \mathrm{Dom}(\mathcal T)$, then, at least one of $$e_i,\quad i=1, ..., 3^d-1,$$ is in $\mathrm{Dom}(\mathcal T)$.

Assume none of $e_i$ was inside $\mathrm{Dom}(\mathcal T)$. Then one asserts that $$\square_E \cap \mathrm{Dom}(\mathcal T) = \varnothing.$$ This contradicts Condition (3) and hence proves the lemma.

For the assertion, assume there is $c\in\Int^d$ with $$c+\square_D \subseteq \mathrm{Dom}(\mathcal T)\quad \textrm{and}\quad \square_E \cap (c+\square_D) \neq \varnothing.$$ Then there exist $$-E \leq n_i\leq  E,\quad 1\leq i\leq d,$$ such that $$(n_1, ..., n_d) \in c+\square_D.$$ Note that $\square_E \cap (c+\square_D) \neq \varnothing$ implies $$-D-E \leq c_i \leq D+E,\quad 1\leq i\leq d,\ c=(c_1, c_2, ..., c_d);$$ 
and also note  
$$c+\square_D = \{(c_1+s_1, c_2+s_2, ..., c_d+s_d): -D \leq s_i \leq D \}.$$ For each $c_i$, if $\abs{c_i}\geq E$, then choose $s_i \in [-D, D]$ such that $\abs{c_i + s_i} = E$; if $\abs{c_i}\leq D$, then choose $s_i =- c_i $ so that $c_i + s_i = 0$; if $D\leq \abs{c_i} \leq E$, then choose $s_i\in [-D, D]$ such that $\abs{c_i + s_i} = E$ (note that one assumes $E\leq 2D$). With this choice of $s_i$, one has that $c+ \square_D$ contains at least one of $e_i$, and so such $e_i$ is inside $\mathrm{Dom}(\mathcal T)$. This contradicts the assumption, and proves the assertion.
\end{proof}

\subsection{Group actions and equivariant quasi-tilings}

Recall
\begin{defn}
Let $X$ be a topological space and let $\Gamma$ be a discrete group. By a (right) $\Gamma$-action on $X$, denoted by $X\curvearrowleft \Gamma$, we mean a continuous map $$X \times \Gamma \ni (x, \gamma) \to x\gamma \in X$$ such that $$ xe=x\quad\mathrm{and}\quad (x\gamma_1)\gamma_2 = x(\gamma_1\gamma_2),\quad x\in X,\ \gamma_1\gamma_2\in \Gamma.$$ 

We say a $\Gamma$-action on $X$ is free if $x\gamma = x$ for some $x\in X$ and $\gamma\in\Gamma$ implies $\gamma = e$.

Consider actions $X\curvearrowleft \Gamma$ and $Y\curvearrowleft \Gamma$. We say that $X\curvearrowleft \Gamma$ is an extension of $Y\curvearrowleft \Gamma$ (or $Y\curvearrowleft \Gamma$ is a factor of $X\curvearrowleft \Gamma$) if there is a quotient map $\pi: X \to Y$ such that $$\pi(x\gamma) = \pi(x)\gamma,\quad x\in X,\ \gamma\in \Gamma.$$
\end{defn}

\begin{defn}
Consider an $\Int^d$-action on topological space $X$. A set-valued map $$X \ni x \mapsto \mathcal T(x) \in 2^{\Int^d}$$ is said to be equivariant if $$\mathcal T(xn) = \mathcal T(x) - n,$$ where $\mathcal T(x) - n$ is the translation of $\mathcal T(x)$ by $-n$.

The map $x \mapsto \mathcal T(x)$ is said to be continuous if for any $R>0$ and any $x\in X$, there is an open set $U \ni x$ such that $$\mathcal T(y) \cap B_R = \mathcal T(x) \cap B_R,\quad y\in U,$$
where $B_R$ is the ball in $\Int^d$ with center $0$ and radius $R$.
\end{defn}

\begin{lem}\label{domain-finite}
Consider an $\Int^d$-action on a topological space $X$. Let $N\in \mathbb N$, and let $x \mapsto \mathcal T(x)$ be a continuous equivariant map with value $(r, D, E)$-tilings of $\Int^d$ with $r > N\sqrt{d}$. Put $$\Omega = \{x \in X: 0 \in \mathrm{Dom}(\mathcal T(x))\}.$$  Then, $\Omega$ is open. Moreover, for any $x\in X$, one has
\begin{eqnarray}\label{bd-Omega}
& & \left | \{n \in \Int^d: n=n_1+\cdots+n_K,\ x(n_1+\cdots+n_k) \in\Omega, \norm{n_k}_\infty \leq N,\right.\\
& & \hskip 1mm \left. 1\leq k \leq K,\ K\in\mathbb N \} \right | \nonumber \\
& \leq & (2D+1)^d. \nonumber
\end{eqnarray}
\end{lem}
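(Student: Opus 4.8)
The plan is to establish the two assertions separately: openness of $\Omega$ from the continuity of $x\mapsto\mathcal{T}(x)$, and the cardinality bound from the $r$-separation of the tiles, after using equivariance to turn the orbit condition into a purely combinatorial statement inside $\Int^d$.

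For openness, let $x\in\Omega$, so $0\in\mathrm{Dom}(\mathcal{T}(x))$; then there is a centre $c\in\mathcal{T}(x)$ with $0\in c+\square_D$, i.e.\ $\norm{c}_\infty\le D$ and hence $\norm{c}_2\le D\sqrt d$. Applying the continuity of $\mathcal{T}$ with $R=D\sqrt d+1$ produces an open set $U\ni x$ with $\mathcal{T}(y)\cap B_R=\mathcal{T}(x)\cap B_R$ for all $y\in U$; in particular $c\in\mathcal{T}(y)$, so $0\in c+\square_D\subseteq\mathrm{Dom}(\mathcal{T}(y))$, that is $y\in\Omega$. Thus $U\subseteq\Omega$, and $\Omega$ is open.

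For the bound, equivariance gives $\mathrm{Dom}(\mathcal{T}(xn))=\mathrm{Dom}(\mathcal{T}(x))-n$ for all $n\in\Int^d$, so $xn\in\Omega$ if and only if $n\in\mathrm{Dom}(\mathcal{T}(x))$. Consequently the set appearing in \eqref{bd-Omega} is exactly
$$ S \;=\; \set{\, n_1+\cdots+n_K \;:\; K\ge 1,\ \norm{n_k}_\infty\le N,\ n_1+\cdots+n_k\in\mathrm{Dom}(\mathcal{T}(x)),\ 1\le k\le K \,}, $$
namely the set of all $m_K$ arising from lattice paths $0=m_0,m_1,\dots,m_K$ with $\norm{m_k-m_{k-1}}_\infty\le N$ all of whose vertices $m_1,\dots,m_K$ lie in $\mathrm{Dom}(\mathcal{T}(x))$. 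The crux is that such a path stays inside the single tile $c+\square_D$ that contains $m_1$: every step satisfies $\norm{m_k-m_{k-1}}_2\le\sqrt d\,\norm{m_k-m_{k-1}}_\infty\le N\sqrt d<r$, whereas if $m_{k-1}$ and $m_k$ belonged to two distinct tiles, then Condition (2) would force $\norm{m_k-m_{k-1}}_2\ge r$. Induction on $k$ then places all of $m_1,\dots,m_K$ in $c+\square_D$, whence $S\subseteq c+\square_D$ and $\abs{S}\le\abs{\square_D}=(2D+1)^d$.

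The proof is short; the one point to watch is that the initial vertex $m_0=0$ need not lie in $\mathrm{Dom}(\mathcal{T}(x))$ — this is precisely the case $x\notin\Omega$ — but this causes no difficulty, since the confinement argument is anchored at $m_1$ and uses only membership of $m_1,\dots,m_K$ in the domain (and if $S$ is empty the bound holds trivially). I expect this tile-confinement step to be the only place where genuine care is needed.
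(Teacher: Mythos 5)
Your openness argument and your treatment of the bound are essentially the paper's argument: the $r$-separation of tiles together with $N\sqrt d<r$ confines an admissible path to a single tile, which has $(2D+1)^d$ points. For $x\in\Omega$ this is exactly what the paper does (it begins by picking $x_0\in\Omega$ and takes $c+\square_D$ to be the tile of $\mathcal T(x_0)$ containing $0$). The gap is precisely at the point you flag and then dismiss. When $x\notin\Omega$, the tile ``$c+\square_D$ containing $m_1$'' depends on the path: two admissible first steps $m_1$, $m_1'$ (both of $\ell^\infty$-norm at most $N$) may lie in two \emph{different} tiles, because the hypotheses only force distinct tiles to be at Euclidean distance at least $r>N\sqrt d$, while the $\ell^\infty$-ball of radius $N$ has Euclidean diameter $2N\sqrt d$, which can exceed $r$. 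Concretely, take $d=1$, $N=10$, $r=11$, $D=100$, and a configuration whose domain near $0$ consists of the two tiles $[-206,-6]$ and $[6,206]$: they are at distance $12\geq r$, $0$ is uncovered, yet both tiles are reachable by a first step of size at most $N$, and unit steps then fill each tile, so the set in \eqref{bd-Omega} has cardinality $2(2D+1)>(2D+1)^d$. Hence your conclusion ``$S\subseteq c+\square_D$ for a single $c$'' does not follow from anchoring at $m_1$, and the stated bound cannot be recovered that way for $x\notin\Omega$.

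The correct anchor is $0$ itself: assume $x\in\Omega$, let $c+\square_D$ be the tile containing $0$, and run your induction starting from $m_0=0$ (which now does lie in the domain); then every admissible partial sum of every path lies in this one fixed tile, giving the uniform bound $(2D+1)^d$. This is the paper's proof, and it is all that Theorem \ref{main-thm} requires, since in the definition of dynamical asymptotic dimension the point $x$ is itself required to lie in $U_i=\Omega_i$. (The lemma's phrase ``for any $x\in X$'' is itself broader than what the paper proves or uses; your write-up is fine on $\Omega$, but the remark that the case $x\notin\Omega$ ``causes no difficulty'' is where the argument breaks.)
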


\begin{proof}
The openness of $\Omega$ follows directly from the continuity of the map $x\mapsto\mathcal T(x)$. Let us show the estimate \eqref{bd-Omega}.

Pick $x_0\in \Omega$, and write $c+\square_D$ to be the tile of $\mathcal T(x_0)$ containing $0$. Since the function $x \mapsto \mathcal T(x)$ is equivariant, one has that $\mathcal{T}(xn) = \mathcal{T}(x) - n$; hence, by Condition (2), for any $n\in \Int^d$ with $\norm{n}_\infty \leq N$, one has that either $0$ is in the tile $c + \square_D - n$ (therefore $x_0n \in \Omega$ and $c-n \in \square_D$) or $0\notin \mathrm{Dom}(\mathcal T(x_0n))$ (therefore $x_0n \notin\Omega$). 

Thus, if there are $n_1, n_2, ..., n_K \in\Int^d$ with $\norm{n_k}_\infty \leq N$ and $$n_1x_0\in\Omega,\ x_0(n_1+n_2)\in\Omega, ..., x_0(n_1+\cdots+n_K) \in \Omega,$$ one has
$$c-n_1\in\square_D,\ c-n_1-n_2\in \square_D, ..., c-n_1-\cdots - n_K \in \square_D, $$ and hence $$n=n_1+\cdots+n_K \in c+\square_D.$$
Since $\abs{c+\square_D} = \abs{\square_D} = (2D+1)^d$, this proves the lemma.
\end{proof}

\subsection{Cantor systems and an estimate of dynamical asymptotic dimension}
Let us focus on extensions of a free $\Int^d$-action on the Cantor set, which is the unique compact separable Hausdorff space that is totally disconnected and perfect.

First, for any free $\Int^d$-action on the Cantor set, equivariant continuous $(r, D, E)$-tiling-valued functions always exist:
\begin{prop}\label{exist-tiling}
Consider a free $\Int^d$-action on $X$ where $X$ is the Cantor set, and let $N\in\mathbb N$ be arbitrary. Then, there are natural numbers $r, D, E$ with $r>N\sqrt{d}$ and $D\leq E \leq 2D$, and a continuous equivariant map $x \mapsto \mathcal T(x)$ on $X$ such that each $\mathcal T(x)$ a $(r, D, E)$-tiling of $\Int^d$.
\end{prop}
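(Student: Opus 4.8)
The plan is to build $\mathcal T$ by first equipping the system with a sufficiently fine clopen \emph{box tiling} and then shrinking each box to a concentric cube. Fix $N$ and put $r=\lfloor N\sqrt d\,\rfloor+1$, a natural number with $r>N\sqrt d$. The one substantial input is the existence, for every sufficiently large $L$, of a continuous equivariant map $x\mapsto\mathcal B(x)$ whose values are partitions of $\Int^d$ into rectangular boxes all of whose side lengths lie in $\{L,L+1\}$; for a free $\Int^d$-action on the Cantor set this is the standard ``clopen Kakutani--Rokhlin (with two consecutive heights)'' / quasi-tiling statement for $\Int^d$ (cf. \cite{OW-tiling}), where continuity and equivariance just mean that $\mathcal B(x)$ is locally determined by clopen data and that $\mathcal B(xn)=\mathcal B(x)-n$. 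Choose $L$ odd with, say, $L\ge 4r+3$, and set $2D+1=L-2r$, so $D=(L-2r-1)/2\in\mathbb N$. For a box $B=v+\prod_{i=1}^d\{0,\dots,s_i-1\}$ of $\mathcal B(x)$ (each $s_i\in\{L,L+1\}$) let $\square_D(B)=c(B)+\square_D$ be the concentric sub-cube of side $2D+1$, positioned so that at least $r$ lattice layers of $B$ separate $\square_D(B)$ from each face of $B$ --- possible since $s_i-(2D+1)\in\{2r,2r+1\}$. Finally set $\mathcal T(x)=\{\,c(B):B\in\mathcal B(x)\,\}$.

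Equivariance $\mathcal T(xn)=\mathcal T(x)-n$ and continuity are then immediate from the corresponding properties of $\mathcal B$, since $B\mapsto c(B)$ is a translation-equivariant local rule, so $\mathcal T(y)\cap B_R$ depends only on $\mathcal B(y)$ in a bounded region around $0$. As for the tiling axioms: (1) holds because the cubes $c(B)+\square_D$ lie inside the pairwise disjoint boxes $B$; and (2) holds because $\square_D(B)$ is at $\ell^\infty$-distance at least $r+1$ from $\Int^d\setminus B$, so two cubes coming from different boxes are at Euclidean distance at least $r+1>r$. The remaining axiom (3), $\square_E\cap\mathrm{Dom}(\mathcal T(x))\neq\varnothing$ for all $x$, reduces to bounding the side of a cube that can fit inside the complement $C:=\Int^d\setminus\mathrm{Dom}(\mathcal T(x))=\bigcup_{B}\bigl(B\setminus\square_D(B)\bigr)$.

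This is the crux. If $a\in C$ and $B$ is the box of $\mathcal B(x)$ containing $a$, then some coordinate $a_j$ sits in one of the two margin slabs of $B$ in direction $j$ (each only $r$ or $r+1$ thick), while $B$ has side $\ge L\gg r$; hence moving $a$ by $r+1$ in direction $j$, towards the centre of $B$, keeps us inside $B$ and makes the $j$-th coordinate fall in the range of $\square_D(B)$. Iterating, after at most $d$ such moves --- each of size $r+1$ and in a coordinate not yet corrected --- one lands in $\square_D(B)\subseteq\mathrm{Dom}(\mathcal T(x))$ at $\ell^\infty$-distance exactly $r+1$ from $a$. Thus every point of $C$ is within $\ell^\infty$-distance $r+1$ of $\mathrm{Dom}(\mathcal T(x))$; in particular, if $0\notin\mathrm{Dom}(\mathcal T(x))$ then $\mathrm{Dom}(\mathcal T(x))\cap\square_{r+1}\neq\varnothing$, so $\square_E$ meets $\mathrm{Dom}(\mathcal T(x))$ as soon as $E\ge r+1$. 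Since $2D+1=L-2r\ge2r+3$, we may take e.g. $E=D$ (indeed any $E\in\{D,\dots,2D\}$ works): then $r>N\sqrt d$, $D\le E\le2D$, and $x\mapsto\mathcal T(x)$ is a continuous equivariant map whose values are $(r,D,E)$-tilings, as required. (If one only has boxes with side lengths in $\{L,\dots,2L\}$, the margin slabs are thicker, but since a cube inside $C$ cannot cross any $\square_D(B)$, a similar estimate bounds cubes in $C$ by side $\le L+2r$, which still leaves room to choose $E\in\{D,\dots,2D\}$ with $2E+1>L+2r$.)

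The genuinely nontrivial ingredient is the existence of the fine clopen box tiling $\mathcal B$; granting it, what remains is the elementary geometric estimate above. It is worth noting why one cannot shortcut this with a single clopen ``marker'' set $Y$: a set which is $n$-separated along orbits is at best $\sim n$-syndetic, and shrinking the cubes enough to be $r$-separated drops their side to $\sim n-2r$, so $\mathrm{Dom}(\mathcal T(x))$ would be only $\sim n$-dense --- too sparse to be met by every $\square_E$ with $E\le2D$. It is precisely the fact that the boxes \emph{tile} (so the sub-cubes occupy a fraction of space tending to $1$ as $L\to\infty$) that makes $\mathrm{Dom}(\mathcal T(x))$ dense enough; this is the step where I expect the real work, or the invocation of a known tiling theorem, to go.
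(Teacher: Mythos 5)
Your argument outsources all of the real content to the ``one substantial input'': a continuous equivariant map $x\mapsto\mathcal B(x)$ whose values are \emph{exact} partitions of $\Int^d$ into boxes with side lengths in $\{L,L+1\}$, which you describe as a standard clopen Kakutani--Rokhlin statement and attribute to \cite{OW-tiling}. This is a genuine gap. The Ornstein--Weiss quasi-tiling theorem does not give this: it produces quasi-tilings that cover only up to $\varepsilon$ (and in a measure-theoretic/Rokhlin sense), not clopen, equivariant, exact tilings of every orbit by boxes of two consecutive side lengths. For $d=1$ one can manufacture such a partition from clopen Kakutani--Rokhlin towers by cutting long return times into blocks of lengths $L$ and $L+1$, but for $d\geq 2$ the existence of clopen exact box tilings for an arbitrary free Cantor $\Int^d$-system is a deep statement (it is in the territory of almost finiteness and of the continuous rectangular-tiling theorems of Gao--Jackson--Krohne--Seward for $F(2^{\Int^d})$, and even invoking those would require an additional equivariant reduction of the given system to such a shift). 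In particular, the assumed input is strictly stronger, and harder, than Proposition \ref{exist-tiling} itself, whose whole point is that only a \emph{quasi}-tiling (separated cubes whose domain merely meets $\square_E$) is needed. Granting your input, the shrinking-with-margins argument and the $\ell^\infty$-distance estimate for condition (3) are fine, but as written the proof is circular in difficulty rather than complete.

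Your closing remark, dismissing the ``single clopen marker set'' route, is also where you diverge from (and misjudge) the actual proof: the paper does exactly that. From a clopen partition $U_1,\dots,U_s$ with $U_in$, $n\in\square_{2L}$, disjoint, it greedily builds a maximal marker set $\mathcal C(x)$ whose cubes $c+\square_L$ are pairwise disjoint; maximality forces $\square_L\cap(\mathcal C(x)+\square_L)\neq\varnothing$, and shrinking each cube only by the fixed amount $r$ (not by a proportion of $L$) yields an $(r,L-r,L+r)$-tiling with $D=L-r$, $E=L+r\leq 2D$ once $L\geq 3r$. Your objection that the resulting domain is ``too sparse to be met by every $\square_E$ with $E\le 2D$'' gets the quantifiers backwards: $E$ is not required to be small relative to the separation of the markers --- it is chosen comparable to $L$, and Lemma \ref{shift-tiling} then compensates by translating by the vectors $\{0,\pm E\}^d$. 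So the elementary maximal-marker construction suffices, and no exact box tiling is needed.
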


\begin{proof}
The construction is similar to that of Lemma 3.4 of \cite{DH-tiling}.

Pick a natural number $r> N\sqrt{d}$, and then pick a natural number $L > 2r$. Since the action is free and $X$ is the Cantor set, by a compactness argument, one obtains mutually disjoint clopen sets $U_1, U_2, ..., U_s$, such that $$X=U_1 \cup U_2\cup\cdots\cup U_s,$$ and for each $U_i$, $1\leq i \leq s$, the open sets $$U_in,\quad n\in \square_{2L}, $$ are mutually disjoint.

Start with $U_1$. For each $x\in X$, put 
$$\left\{
\begin{array}{lll}
\mathcal C_1(x)  & = & \{n\in\Int^d: xn \in U_1\}, \\
\cdots & \cdots & \cdots \\
\mathcal C_i(x) & = & \mathcal C_{i-1}(x) \cup \{n\in\Int^d: xn \in U_i,\ (n+ \square_L) \cap (\mathcal C_{i-1}(x) + \square_L) = \varnothing \},\\
\cdots &\cdots & \cdots \\
\mathcal C_s(x) & = & \mathcal C_{s-1}(x) \cup \{n\in\Int^d: xn \in U_s,\ (n+ \square_L) \cap (\mathcal C_{s-1}(x) + \square_L) = \varnothing \}.
\end{array}
\right.$$

Since $U_1$ is clopen, the map $x\mapsto \mathcal C_1(x)$ is continuous in the sense that for any $x$ and any $R>0$, there is a neighbourhood $W$ of $x$ such that $$\mathcal C_1(y) \cap B_R = \mathcal C_1(x) \cap B_R,\quad y\in W.$$

Consider the map $x\mapsto \mathcal C_2(x)$. Fix $x\in X$, $R>0$. Since $U_2$ is clopen, there is a neighbourhood $W$ of $x$ such that 
$$\{n\in \Int^d: xn\in U_2\} \cap B_R = \{n\in \Int^d: yn\in U_2\} \cap B_R,\quad y\in W.$$ Note that $x\mapsto\mathcal C_1(x)$ is continuous, then the neighbourhood $W$ can be chosen so that $$(\mathcal C_{1}(x) + \square_L) \cap B_R = (\mathcal C_{1}(x) + \square_L)\cap B_R, \quad y\in W,$$ and therefore for any $y\in W$,
\begin{eqnarray*}
& & \{xn \in U_2,\ (n+ \square_L) \cap (\mathcal C_{1}(x) + \square_L) = \varnothing\} \cap B_R  \\
& = & \{yn \in U_2,\ (n+ \square_L) \cap (\mathcal C_{1}(y) + \square_L) = \varnothing\}\cap B_R.
\end{eqnarray*} 
Together with the continuity of $x \mapsto \mathcal C_1(x)$, this shows that $x\mapsto \mathcal C_2(x)$ is continuous.

Repeat this argument, one shows that the map $x\mapsto \mathcal C_s(x)$ is continuous.

Let us show that the map $x\mapsto \mathcal C_s(x)$ is equivariant. Start with $x \mapsto \mathcal C_1(x)$. Let $n\in\Int^d$ and consider $xn$. Since $xm \in U_1$ if and only if $x(n+m-n)\in U_1$, one has $$\mathcal C_1(xn) = \mathcal C_1(x) - n.$$ A similar argument shows that $\mathcal C_2(x), ..., \mathcal C_s(x)$ are equivariant.

One asserts that $$(c_1+\square_L)\cap(c_2+\square_L) = \varnothing,\quad c_1\neq c_2,\ c_1, c_2 \in \mathcal C_s(x).$$ 

Indeed, since $U_1n$, $n\in \square_{2L}$,  are mutually disjoint, one has that $$(c+\square_{2L}) \cap \mathcal C_1(x) = c,\quad c\in\mathcal C_1(x),$$ and thus $$(c_1+\square_L)\cap(c_2+\square_L) = \varnothing,\quad c_1\neq c_2,\ c_1, c_2 \in \mathcal C_1(x).$$ 

Now, pick $$c_1, c_2\in \mathcal C_2(x) =  \mathcal C_{1}(x) \cup \{n\in\Int^d: xn \in U_2,\ (n+ \square_L) \cap (\mathcal C_{1}(x) + \square_L) = \varnothing \}.$$ If $c_1, c_2 \in \mathcal C_1(x)$, then as shown above, $$(c_1+\square_L)\cap(c_2+\square_L) = \varnothing.$$
Assume that $$c_1, c_2\in \{n\in\Int^d: xn \in U_2,\ (n+ \square_L) \cap (\mathcal C_{1}(x) + \square_L) = \varnothing \}\subseteq \{n\in\Int^d: xn \in U_2 \}.$$ Then, since $U_2n$, $n\in \square_{2L}$,  are mutually disjoint, the same argument as that of $\mathcal C_1(x)$ shows that $$(c_1+\square_L)\cap(c_2+\square_L) = \varnothing.$$ 
Assume that $c_1\in\mathcal C_1$ and $c_2\in \{n\in\Int^d: xn \in U_2,\ (n+ \square_L) \cap (\mathcal C_{1}(x) + \square_L) = \varnothing \}$. Then the equation $$(c_1+\square_L)\cap(c_2+\square_L) = \varnothing$$ just follows from the definition. 

Repeat this argument for $\mathcal C_3(x), ...,\mathcal C_s(x)$, and this proves the assertion.

Note that for the given $x$, there exists a $U_i$ containing $x$. Therefore, either $$\square_L \cap (\mathcal C_{i-1}(x) + \square_L) \neq \varnothing\quad \textrm{or}\quad 0\in \mathcal C_i(x).$$ In particular, one always has that $\square_L \cap (\mathcal C_i(x) + \square_L) \neq \varnothing$, and hence
$$\square_L \cap (\mathcal C_s(x) + \square_L) \neq \varnothing.$$
 
 
To summarize, setting $\mathcal C(x)=\mathcal C_s(x)$, one obtains a continuous equivariant map $x \mapsto \mathcal C(x)$ satisfying
 \begin{enumerate}
\item[(1)] $(c_i + \square_L) \cap (c_j+\square_L) = \varnothing$, $c_i\neq c_j$, $c_i, c_j\in\mathcal C_s(x)$ and
\item[(2)] $\square_L \cap (\mathcal C_s(x) + \square_L) \neq \varnothing$;
\end{enumerate}
hence it satisfies
 \begin{enumerate}
\item[(3)] $(c_i + \square_{L-r}) \cap (c_j+\square_{L-r}) = \varnothing$, $c_i\neq c_j$, $c_i, c_j\in\mathcal C_s(x)$,
\item[(4)] $\square_{L+r} \cap (\mathcal C_s(x) + \square_{L-r}) \neq \varnothing$;
\end{enumerate}
and, moreover
\begin{enumerate}
\item[(5)] the (Euclidean) distance between $c_i + \square_{L-r}$ and $c_j+\square_{L-r}$ is at least $r$ if $c_i\neq c_j$.
\end{enumerate}
Thus, each $\mathcal C(x)$ is an $(r, L-r, L+r)$ tiling. Since $L > 2r$, one has $L+r < 2(L-r)$, and this proves the statement of the proposition.
\end{proof}

\begin{cor}\label{tiling-cover}
Consider a free $\Int^d$-action on $X$ where $X$ is the Cantor set, and let $N\in\mathbb N$ be arbitrary.
Then, there exist continuous equivariant maps $$x \mapsto \mathcal T_i(x),\quad i=0, 1, ..., 3^d-1,$$ with each $\mathcal T_i(x)$ a $(r, D, E)$-tilings of $\Int^d$ for some $r, D, E\in\mathbb N$ with $r > N\sqrt{d}$, such that, if put $$\Omega_i = \{x \in X: 0 \in \mathrm{Dom}(\mathcal T_i(x))\},\quad i=0, 1, ..., 3^d-1,$$ then
$$\Omega_0 \cup \Omega_1 \cup\cdots\cup\Omega_{3^d-1} = X.$$
\end{cor}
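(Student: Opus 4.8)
The plan is to build the family $\{\mathcal{T}_i\}$ by translating a single equivariant tiling-valued map by the finitely many fixed vectors supplied by Lemma~\ref{shift-tiling}. First apply Proposition~\ref{exist-tiling} to the given free action and the given $N$; this yields natural numbers $r, D, E$ with $r > N\sqrt d$ and $D \le E \le 2D$, together with a continuous equivariant map $x \mapsto \mathcal{T}(x)$ whose values are $(r,D,E)$-tilings of $\Int^d$. Then apply Lemma~\ref{shift-tiling} to this particular $E$ to get vectors $e_1, \dots, e_s \in \Int^d$, $s = 3^d-1$; from the construction in that proof we may assume each $e_i$ has entries in $\{0,\pm E\}$, so $\norm{e_i}_\infty \le E$. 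Put $e_0 = 0$ and define $\mathcal{T}_i(x) := \mathcal{T}(x) + e_i$ for $i = 0, 1, \dots, s$, together with $\Omega_i := \{x \in X : 0 \in \mathrm{Dom}(\mathcal{T}_i(x))\}$.

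It then remains to check the three properties. Equivariance is immediate, a constant translation commuting with the action: $\mathcal{T}_i(xn) = \mathcal{T}(xn) + e_i = (\mathcal{T}(x) - n) + e_i = \mathcal{T}_i(x) - n$. Continuity of $\mathcal{T}_i$ follows from that of $\mathcal{T}$ by adjusting the radius: on a neighbourhood of $x$ where $\mathcal{T}(y)$ agrees with $\mathcal{T}(x)$ on the ball of radius $R + \norm{e_i}$ one gets $\mathcal{T}_i(y) \cap B_R = \mathcal{T}_i(x) \cap B_R$. Since $\mathrm{Dom}(\mathcal{T}_i(x)) = \mathrm{Dom}(\mathcal{T}(x)) + e_i$, Conditions (1) and (2) of the definition of a tiling pass verbatim to $\mathcal{T}_i(x)$; and Condition (3) holds with $E$ replaced by $E' := 2E$, since a point $p \in \square_E \cap \mathrm{Dom}(\mathcal{T}(x))$ gives $p + e_i \in \square_{2E} \cap \mathrm{Dom}(\mathcal{T}_i(x))$ (because $\norm{p + e_i}_\infty \le E + \norm{e_i}_\infty \le 2E$). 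Hence each $\mathcal{T}_i(x)$ is an $(r, D, E')$-tiling with the same $r > N\sqrt d$. Finally, for any $x \in X$, Lemma~\ref{shift-tiling} applied to the $(r,D,E)$-tiling $\mathcal{T}(x)$ (legitimate as $D \le E \le 2D$) gives $0 \in \bigcup_{i=0}^{s}\mathrm{Dom}(\mathcal{T}(x) + e_i) = \bigcup_{i=0}^{s} \mathrm{Dom}(\mathcal{T}_i(x))$, i.e.\ $x \in \Omega_0 \cup \cdots \cup \Omega_s$; as $x$ was arbitrary, $\Omega_0 \cup \cdots \cup \Omega_{3^d-1} = X$.

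There is no essential difficulty beyond assembling Proposition~\ref{exist-tiling} and Lemma~\ref{shift-tiling}. The only points demanding a little care are the bookkeeping checks that a constant translation preserves equivariance and continuity and carries an $(r,D,E)$-tiling to an $(r,D,2E)$-tiling, so that all of the $\mathcal{T}_i$ are tilings for one common parameter triple; and the observation (built into the statement of Lemma~\ref{shift-tiling}) that the vectors $e_i$ depend only on $E$, not on the individual tiling, so that the same translates serve simultaneously for every $x \in X$.
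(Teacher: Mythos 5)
Your proposal is correct and follows essentially the same route as the paper: apply Proposition \ref{exist-tiling} to obtain one continuous equivariant $(r,D,E)$-tiling-valued map with $D\leq E\leq 2D$, translate it by the vectors $e_i$ from Lemma \ref{shift-tiling}, and conclude the covering property from that lemma. Your extra bookkeeping observation that the translates only satisfy Condition (3) with $E$ enlarged to $2E$ is a small refinement the paper passes over silently, but it does not change the argument, since the statement allows any parameter triple and the later application (Lemma \ref{domain-finite}) uses only $r$ and $D$.
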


\begin{proof}
It follows from Proposition \ref{exist-tiling} that there are natural numbers $r, D, E$ with $$r>N\sqrt{d}\quad\mathrm{and}\quad D\leq E \leq 2D,$$ and a continuous equivariant map $x \mapsto \mathcal T_0(x)$ on $X$ such that each $\mathcal T_0(x)$ a $(r, D, E)$-tiling of $\Int^d$.

Consider the translations of the function $\mathcal T_0$:
$$\mathcal T_1=\mathcal T_0 + e_1,\ \mathcal T_2=\mathcal T_0 + e_2,\ ..., \mathcal T_{3^d-1}=\mathcal T_0 + e_{3^d-1},$$
where $e_1, ..., e_{3^d-1}$ are the vectors (with repect to $E$) obtained from Lemma \ref{shift-tiling}. Since $D \leq E \leq 2D$, it follows from Lemma \ref{shift-tiling} that for any $x\in X$, one has
$$0 \in \mathrm{Dom}(\mathcal T_0(x)) \cup  \mathrm{Dom}(\mathcal T_1(x)) \cup \cdots\cup  \mathrm{Dom}(\mathcal T_{3^d-1}(x)),$$ and thus
$$\Omega_0 \cup \Omega_1 \cup\cdots\cup\Omega_{3^d-1} = X,$$
as desired.
\end{proof}

\begin{thm}\label{main-thm}
The dynamical asymptotic dimension of any free $\Int^d$-action on the Cantor set is at most $3^d-1$.
\end{thm}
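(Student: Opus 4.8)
The plan is to verify the definition of dynamical asymptotic dimension head‑on, taking the equivariant quasi‑tilings of Corollary~\ref{tiling-cover} as the required open cover and controlling the orbit‑sets $\mathcal O_x$ by the counting estimate of Lemma~\ref{domain-finite}.

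First I would fix an arbitrary finite subset $\mathcal F \subseteq \Int^d$ and let $N = \max\{\norm{\gamma}_\infty : \gamma \in \mathcal F\}$ (say $N = 1$ if $\mathcal F = \varnothing$). Applying Corollary~\ref{tiling-cover} to this $N$ produces natural numbers $r, D, E$ with $r > N\sqrt d$ and continuous equivariant maps $x \mapsto \mathcal T_i(x)$, $i = 0, 1, \ldots, 3^d-1$, each valued in $(r, D, E)$-tilings, whose domains at $0$ give sets $\Omega_i = \{x \in X : 0 \in \mathrm{Dom}(\mathcal T_i(x))\}$ covering $X$. By Lemma~\ref{domain-finite} each $\Omega_i$ is open, so putting $U_i := \Omega_i$ we have an open cover $U_0 \cup U_1 \cup \cdots \cup U_{3^d-1}$ of $X$ consisting of $3^d$ sets.

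It then remains to bound $\abs{\mathcal O_x}$ uniformly for $x \in U_i$. Writing the $\Int^d$-action additively, any $y \in \mathcal O_x$ has the form $y = x(n_1 + \cdots + n_K)$ with $n_k = \gamma_k \in \mathcal F$, so $\norm{n_k}_\infty \leq N$, and with every partial sum $x(n_1 + \cdots + n_k)$ lying in $U_i = \Omega_i$ for $1 \leq k \leq K$. Hence the group element $n = n_1 + \cdots + n_K$ labelling such a $y$ lies precisely in the finite set estimated in \eqref{bd-Omega} of Lemma~\ref{domain-finite} (applied to $\mathcal T_i$ and $\Omega_i$), which has at most $(2D+1)^d$ elements. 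Since the action is free, $n \mapsto xn$ is injective, and therefore $\abs{\mathcal O_x} \leq (2D+1)^d$, a bound independent of $x$ and of $i$. This shows that the cover $U_0, \ldots, U_{3^d-1}$ satisfies the defining property of dynamical asymptotic dimension for $\mathcal F$ with uniform bound $(2D+1)^d$; as $\mathcal F$ was arbitrary, the dynamical asymptotic dimension is at most $3^d-1$.

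As for difficulty, all of the real work has already been carried out: Lemma~\ref{shift-tiling} supplies the $3^d$ shifts, Proposition~\ref{exist-tiling} supplies continuous equivariant tilings on the Cantor set, and Lemma~\ref{domain-finite} supplies the cardinality bound. The only point requiring slight care is to recognise that the ``partial products stay in $U_i$'' clause of the definition of DAD is exactly the hypothesis feeding Lemma~\ref{domain-finite}; beyond that the argument is pure bookkeeping, so I do not anticipate a genuine obstacle in this final step.
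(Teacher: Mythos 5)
Your proposal is correct and follows essentially the same route as the paper: choose $N$ dominating $\norm{\gamma}_\infty$ for $\gamma\in\mathcal F$, take the open cover $\Omega_0,\ldots,\Omega_{3^d-1}$ from Corollary~\ref{tiling-cover}, and bound $\abs{\mathcal O_x}$ by $(2D+1)^d$ via Lemma~\ref{domain-finite}. The only cosmetic difference is that you invoke freeness to pass from group elements to orbit points, which is not actually needed (the map $n\mapsto xn$ onto $\mathcal O_x$ is surjective in any case), but this does not affect correctness.
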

\begin{proof}
Let $N\in\mathbb N$ be arbitrary.
It follows from Corollary \ref{tiling-cover} that there exist continuous equivariant maps $$x \mapsto \mathcal T_i(x),\quad i=0, 1, ..., 3^d-1,$$ with each $\mathcal T_i(x)$ a $(r, D, E)$-tilings of $\Int^d$ for some $r, D, E$ with $r > N\sqrt{d}$ with
$$\Omega_0 \cup \Omega_1 \cup\cdots\cup\Omega_{3^d-1} = X,$$
where $$\Omega_i = \{x \in X: 0 \in \mathrm{Dom}(\mathcal T_i(x))\},\quad i=0, 1, ..., 3^d-1,$$ which is open.

Since $r> N\sqrt{d}$, by Lemma \ref{domain-finite}, for any $i=0, 1, ..., 3^d$, one has
\begin{eqnarray*}
& & \left |n \in \Int^d: n=n_1+\cdots+n_K,\ x(n_1+\cdots+n_k) \in\Omega_i,\ \norm{n_k}_\infty \leq N,\right. \\
& & \hskip 1mm \left. 1\leq k \leq K,\ K\in\mathbb N \} \right | \\
& \leq & (2D+1)^d<+\infty. \nonumber
\end{eqnarray*}
That is, the dynamical asymptotic dimension of $X\curvearrowleft \Int^d$ is at most $3^d-1$.
\end{proof}

\begin{lem}
Let $X\curvearrowleft \Gamma$ be an extension of a free action $Y\curvearrowleft \Gamma$. Then the dynamical asymptotic dimension of $X\curvearrowleft \Gamma$ is at most the dynamical asymptotic dimension of $Y\curvearrowleft \Gamma$.
\end{lem}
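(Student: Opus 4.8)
The plan is to push the open covers witnessing the DAD of $Y\curvearrowleft\Gamma$ forward along... no, wait — the factor map $\pi\colon X\to Y$ goes the wrong way for that; instead we \emph{pull back} the covers. Fix a finite set $\F\subseteq\Gamma$, and let $d$ be the DAD of $Y\curvearrowleft\Gamma$. Apply the definition of DAD to $Y$ and this same $\F$ to obtain an open cover $V_0\cup V_1\cup\cdots\cup V_d$ of $Y$ with the property that for each $i$ and each $y\in V_i$, the orbit-type set $\mathcal O_y$ (staying inside $V_i$ under products of elements of $\F$) has cardinality at most some uniform bound $M$. Now set $U_i:=\pi^{-1}(V_i)$. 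Since $\pi$ is continuous and surjective, $U_0\cup U_1\cup\cdots\cup U_d$ is an open cover of $X$.

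The heart of the argument is the cardinality estimate for the sets $\mathcal O_x$ associated to $X$, $U_i$, $\F$. The key structural point is that the factor map is $\Gamma$-equivariant, so for $x\in U_i=\pi^{-1}(V_i)$ we have $x\gamma_1\cdots\gamma_k\in U_i$ if and only if $\pi(x)\gamma_1\cdots\gamma_k\in V_i$. Hence if $x\in U_i$ and $y=x\gamma_1\cdots\gamma_K$ with all partial products $x\gamma_1\cdots\gamma_k$ in $U_i$, then $\pi(y)=\pi(x)\gamma_1\cdots\gamma_K$ with all partial products $\pi(x)\gamma_1\cdots\gamma_k$ in $V_i$, i.e.\ $\pi(y)\in\mathcal O_{\pi(x)}$. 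This gives a map $\mathcal O_x\to\mathcal O_{\pi(x)}$, $y\mapsto\pi(y)$; but this map need not be injective, so we cannot directly conclude $\abs{\mathcal O_x}\le M$.

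Here is where freeness of $Y\curvearrowleft\Gamma$ enters, and this is the step I expect to be the main obstacle. I claim the map $\mathcal O_x\to\mathcal O_{\pi(x)}$ is in fact injective. Suppose $y_1=x\gamma_1\cdots\gamma_K$ and $y_2=x\eta_1\cdots\eta_L$ are two elements of $\mathcal O_x$ with $\pi(y_1)=\pi(y_2)$. Writing $g=\gamma_1\cdots\gamma_K$ and $h=\eta_1\cdots\eta_L$, we have $xg=y_1$, $xh=y_2$, so $y_1=(xh)(h^{-1}g)=y_2(h^{-1}g)$, and applying $\pi$ gives $\pi(y_2)=\pi(y_1)=\pi(y_2)(h^{-1}g)$. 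Since $Y\curvearrowleft\Gamma$ is free, $h^{-1}g=e$, so $g=h$ and therefore $y_1=xg=xh=y_2$. Thus $\abs{\mathcal O_x}\le\abs{\mathcal O_{\pi(x)}}\le M$, with $M$ the uniform bound for $Y$. This bound is independent of $x$, so the open cover $U_0\cup\cdots\cup U_d$ witnesses that the DAD of $X\curvearrowleft\Gamma$ is at most $d$, as required. (Combining this lemma with Theorem \ref{main-thm} yields Corollary \ref{main-cor}: any extension of a free $\Int^d$-action on the Cantor set has DAD at most $3^d-1$.)
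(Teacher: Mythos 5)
Your proof is correct and follows essentially the same route as the paper: pull the cover back along $\pi$, use equivariance to transfer the partial-product conditions, and invoke freeness of $Y\curvearrowleft\Gamma$ to control cardinalities. The only cosmetic difference is that you phrase the freeness step as injectivity of $\pi$ restricted to $\mathcal O_x$, whereas the paper counts the group elements $\gamma_1\cdots\gamma_K$ directly; the content is identical.
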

\begin{proof}

Let $d\in\Int$ such that the dynamical asymptotical dimension of $Y\curvearrowleft \Gamma$ is at most $d$. Let $\Gamma_0\subseteq \Gamma$ be finite. Then, together with the freeness of $Y\curvearrowleft \Gamma$, there exist an open cover $U_0 \cup U_1 \cup \cdots \cup U_d$ of $Y$ and $M>0$ such that for each $U_i$, $0\leq i\leq d$, $y_0\in U_i$, one has that 
\begin{equation}\label{bd-Y}
\abs{\{\gamma_1\cdots\gamma_K: \exists \gamma_1, ..., \gamma_K\in \Gamma_0,\ y_0\gamma_1\cdots\gamma_k\in U_i,\ 1\leq k\leq K,\ K\in \mathbb N\}} \leq M.
\end{equation}

Consider the open sets $$\pi^{-1}(U_0),\ \pi^{-1}(U_1),\ ...,\ \pi^{-1}(U_d),$$ where $\pi: X \to Y$ is the quotient map, and note that they form an open cover of $X$. For each $0\leq i\leq d$, pick an arbitrary $x_0\in \pi^{-1}(U_i)$ and assume there are $\gamma_1, ..., \gamma_K\in\Gamma_0$ for some $K\in \mathbb N$ such that 
$$x_0\in \pi^{-1}(U_i),\ x_0\gamma_1 \in \pi^{-1}(U_i),\ ... ,\  x_0\gamma_1\gamma_2\cdots\gamma_K \in \pi^{-1}(U_i).$$ Applying the quotient map $\pi$, one has
$$\pi(x_0)\in U_i,\ \pi(x_0)\gamma_1 \in U_i,\ ... ,\  \pi(x_0)\gamma_1\gamma_2\cdots\gamma_K \in U_i,$$ and, by \eqref{bd-Y}, this implies
\begin{equation*}
\abs{\{\gamma_1\cdots\gamma_K: \exists \gamma_1, ..., \gamma_K\in \Gamma_0,\ x_0\gamma_1\cdots\gamma_k\in \pi^{-1}(U_i),\ 1\leq k\leq K,\ K\in\mathbb N\}} \leq M.
\end{equation*}
Thus, the dynamical asymptotic dimension of $X\curvearrowleft \Gamma$ is at most $d$. 
\end{proof}

Then, the following is a straightforward corollary of Theorem \ref{main-thm}:
\begin{cor}\label{main-cor}
The dynamical asymptotic dimension of any extension of a free $\Int^d$-action on the Cantor set is at most $3^d-1$.
\end{cor}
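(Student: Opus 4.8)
The plan is simply to combine the two results already in hand: Theorem \ref{main-thm}, which bounds the dynamical asymptotic dimension of a free $\Int^d$-action on the Cantor set by $3^d-1$, and the Lemma immediately preceding this corollary, which asserts that passing to an extension does not increase the dynamical asymptotic dimension, provided the \emph{base} action is free.

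Concretely, suppose $X\curvearrowleft\Int^d$ is an arbitrary extension of a free action $Y\curvearrowleft\Int^d$, where $Y$ is the Cantor set, with factor map $\pi\colon X\to Y$. First I would invoke Theorem \ref{main-thm} to conclude that the dynamical asymptotic dimension of $Y\curvearrowleft\Int^d$ is at most $3^d-1$. Then I would apply the preceding Lemma with $\Gamma=\Int^d$: since $Y\curvearrowleft\Int^d$ is free and $X\curvearrowleft\Int^d$ is an extension of it, the dynamical asymptotic dimension of $X\curvearrowleft\Int^d$ is at most that of $Y\curvearrowleft\Int^d$, and hence at most $3^d-1$. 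This is the entire argument.

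There is essentially no obstacle; the one point worth noting explicitly is that the Lemma requires only the \emph{factor} action to be free, not the total-space action, which is precisely the hypothesis available here. In particular, no compactness, metrizability, or freeness assumption is imposed on $X$ itself, so the conclusion applies to a genuinely arbitrary extension, as claimed.
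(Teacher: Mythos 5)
Your argument is exactly the paper's: combine Theorem \ref{main-thm} applied to the free Cantor factor with the preceding Lemma, which bounds the dynamical asymptotic dimension of an extension by that of its free factor. This is correct and matches the paper's (implicit, one-line) proof.
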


\bibliographystyle{plainurl}
\bibliography{operator_algebras}

\end{document}